\newtheorem{proposition}{Proposition}[section]
\newtheorem{theorem}{Theorem}[section]
\newtheorem{definition}{Definition}[section]
\newtheorem{remark}{Remark}[section]
\newtheorem{corollary}{Corollary}[section]
\newtheorem{example}{Example}[section]
\numberwithin{equation}{section}
\theoremstyle{plain}
\begin{document}
\markright{Convergence of the sequences of Young measures}
\thispagestyle{empty}

\begin{center}
\textbf{\Large Weak convergence of the sequences of homogeneous Young measures associated with a class of oscillating functions}\\
\vspace{1cm}
Piotr Pucha{\l}a\\
\vspace{0.5cm}
{\small\mbox{Institute of
 Mathematics, Czestochowa University of Technology,
 al. Armii Krajowej 21}, 42-200 Cz\c{e}stochowa, Poland\\
Email: piotr.puchala@im.pcz.pl, p.st.puchala@gmail.com}
\end{center}

\begin{abstract}
We take under consideration Young measures with densities. The notion of density of a Young measure is introduced and illustrated with examples. It is proved that the density of a Young measure is weakly sequentially closed set. In the case when density of a Young measure is a singleton (up to the set of null measure), it is shown that the strong closedness (in $\textnormal{rca}(K)$) of the set of such measures, associated with Borel functions with values in the compact set $K\subset\mathbb R^l$, is equivalent with the strong closedness (in $L^1(K)$) of the set of their densities, provided the set $K$ is convex. For an m-oscillating function the notion of a total slope is proposed. It turns out, that if the total slopes of the elements of the sequence of oscillating functions form monotonic sequence, then the sequence of the respective (homogeneous) Young measures is weakly convergent in $\textnormal{rca}(K)$. The limit is a homogeneous Young measure with the density being the weak $L^1$ sequential limit of the densities of the underlying Young measures.\\

\noindent\textbf{Keywords}: Young measures, weak convergence, density, total slope \\
\noindent\textbf{AMS Subject Classification}: 46N10; 46N30; 49M30; 60A10
 \bigskip

\end{abstract}

\section[]{Introduction}

Minimization of functionals with nonconvex integrands is an important problem both from theoretical and practical, including engineering applications, points of view. The fact that in this case the considered functional, although bounded from below, usually does not attain its infimum, is the source of the main difficulty when dealing with this kind of task. It is described in the following example attributed to Oscar Bolza and Laurence Chisholm Young.
\begin{example} (see \cite{Muller})
Find the minimum of the functional
\[
\mathcal{J}(u)=\int\limits_{0}^{1}\bigl[ u^2+
\bigl( (\tfrac{du}{dx})^2-1\bigr)^2\bigr] dx,
\]
with boundary conditions $u(0)=0=u(1)$.

It can be shown that $\inf\mathcal{J}=0$. 

Consider the function
\[
u(x)=
\begin{cases}
x, 
&\text{for $x\in\big[0,\tfrac{1}{4}\bigr)$}\\
\tfrac{1}{2}-x,
&\text{for $x\in\bigl[\tfrac{1}{4},\tfrac{3}{4}\bigr)$}\\
x-1, &\text{for $x\in\bigl[\tfrac{3}{4},1\bigr)$}.
\end{cases}
\]
Then the sequence $u_n(x):=\tfrac 1 nu(nx)$ is the minimizing sequence for $\mathcal J$, that is it satisfies the condition
$\mathcal J(u_n)\to\inf\mathcal J$. However, we have $\inf\mathcal J\neq\mathcal J(\lim u_n)$ \mbox{because if the limit $u_0$ of $(u_n)$ were the function satisfying the equality $\inf\mathcal J=\mathcal J(u_0)$,} it would have to satisfy simultaneously two mutually contradictory conditions:
$u_0\equiv 0$ and $\tfrac{du_0}{dx}=\pm 1$ a.e. (with respect to the Lebesgue measure on $[0,1]$). This means that $\mathcal{J}$ does not attain its infimum. 
\end{example}
Observe, that the elements of the minimizing sequence are wildly oscillating functions. We will call the functions of this type the \emph{m-oscillating} functions, see Definition \ref{m-oscill}. This is typical situation when one minimizes integral functionals with non-(quasi)convex integrands.

Further motivating examples can be found, among others, in \cite{Pedregal} and \cite{Roubicek}.

Generally, the problem can be dealt with in two ways. The first one is convexification the original functional (more precisely: quasiconvexification). This procedure saves the infimum of the functional, but it has two main drawbacks. Namely, computing explicit form of the (quasi)convex envelope is usually very difficult in practice. Further, it erases some important information concerning the behaviour of the minimizing sequences: calculating a weak$^\ast$ accumulation point of the minimizing sequence is calculation a limit of the sequence with integral elements. The integrands are compositions of the Carath\'eodory function with highly oscillating elements of the minimizing sequence. Thus, in some sense, only the mean values are taken into account, see for example \cite{Puchala3} and references cited there.

Another way is connected with the discovery of Laurence Chisholm Young, first published in \cite{Young1}. He observed that the weak$^\ast$ limit of a sequence with elements being composition of a continuous function with oscillating functions is in fact a \emph{set function}. This set function can be looked at as a mean summarizing the spatial oscillatory properties of minimizing sequences. Thus not all the information concerning behavioral characteristics of the phase involved is lost while passing to the limit. 

More precisely, let there be given: $\mathbb{R}^d\supset\varOmega$ -- nonempty, bounded open set of the Lebesgue measure $M>0$; $K\subset\mathbb{R}^l$ -- compact; $(u_n)$ -- a sequence of functions from 
$\varOmega$ to $K$, convergent to some function $u_0$: weakly$^{\ast}$ in $L^\infty$ or weakly in $L^p$, 
$p\geq 1$; $\varphi$ -- an arbitrary continuous real valued function on $\mathbb{R}^l$.

Then the continuity of $\varphi$ yields the norm boundedness of the sequence $(\varphi(u_n))$ of compositions. By the Banach-Alaoglu theorem we infer the existence of the converging to some function $g$ the subsequence of $(\varphi(u_n))$. However,  in general not only $g\neq\varphi((u_0))$, but $g$ is not even a function with domain in $\mathbb R^l$. It is, as Laurence Chisholm Young first proved for the special case in 1937, a family $(\nu_x)_{x\in\varOmega}$ Borel probability measures defined on Borel $\sigma$-algebra of subsets of $\mathbb R^l$, with supports contained in $K$, satisfying for each continuous $\varphi$ and any integrable function $w$ the condition
\[
\lim\limits_{n\to\infty}\int\limits_{\varOmega}\varphi(u_n(x))w(x)dx=
\int\limits_{\varOmega}\int\limits_K\varphi(s)\nu_x(ds)w(x)dx:=
\int\limits_{\varOmega}\overline{\varphi}(x)w(x)dx,
\]
where
\[
\overline{\varphi}(x):=\int\limits_{\mathbb R^{l}}\varphi(\lambda)\nu_x(d\lambda).
\]
Thus the family $(\nu_x)_{x\in\varOmega}$ can be regarded as the 'generalized weak$^\ast$ limit' of the sequence $(\varphi(u_n))$. Young himself called them 'generalized curves'; today we call them \emph{Young measures} (or \emph{relaxed controls} in control theory) \emph{associated with the sequence} $(u_n)$.
 
In the very important special case, where for any $x\in\varOmega$ there holds $\nu_x=\nu$ (that is the family $(\nu_x)_{x\in\varOmega}$ is a one-element only), we use the term \emph{homogeneous Young measure}. 

We can thus look at Young measure as at the generalized limit of the sequence whose elements are compositions of the fixed continuous function with the elements of weakly (or weakly$^\ast$) convergent sequence of oscillating functions. This approach is exposed in detail in \cite{Pedregal}, where the existence theorem (Theorem 6.2) with very general assumptions is stated and proved; \mbox{one may also consult \cite{Muller}.}

We can also look at the Young measures from another point of view. Namely, they can be regarded as linear functionals acting on 
$L^1(\varOmega,C(K))$ -- the space of vector-valued, Bochner integrable functions defined on $\varOmega$ and taking values in the Banach space of continuous real-valued functions defined on a compact set $K$. This enables us to associate Young measure with \emph{any} Borel function $f\colon\varOmega\to K$, see Theorem 3.1.6 in \cite{Roubicek}. Using this approach one may study Young measures in general, with Young measures associated with measurable functions being specific examples. We refer the reader to \cite{Roubicek} for detailed exposition of this point of view.

Although calculating an explicit form of the Young measure is usually very difficult, it turns out that quite often it can be done relatively easily, when functions under consideration are bounded oscillating ones. This is possible if the latter of the approaches mentioned above is adopted. As it has been observed in \cite{Puchala1} and \cite{GrzybowskiPuchala}, for quite large class of oscillating functions Young measures associated with them are homogeneous, absolutely continuous with respect to the Lebesgue measure with densities being merely the sums of the absolute values of the Jacobians of the inverses of their invertible parts. These sums (called later the \emph{total slopes}, see Definition \ref{tslope}) are important: \emph{different} functions with equal total slopes have the same Young measure. Thus in many cases (for example for the sequence 
$f_n(t)=\sin(2n\pi t),\,t\in(0,1),\,n\in\mathbb N$, a kind of 'canonical' example of the sequence of oscillating functions generating homogeneous Young measure) the sequence of the associated Young measures is constant and it suffices to calculate only the Young measure associated with $f_1$.

In \cite{Puchala2} these observations has been broadened. Namely, it has been proved that the weak convergence of the sequence of homogoneous Young measures (understood as elements of the Banach space $\textnormal{rca}(K)$) with densities is equivalent to the $L^1$ convergence of the sequence of these densities.

It is worth mentioning that the above results have been obtained with really simple, in comparison to the usual Young measure methods, apparatus: the change of variable theorem for multiple integrals plays central role. This can be important in applications.

After the above short presentation of the classical problem of minimizing an integral functional with nonconvex integrand, whose solution leads directly to the Young measures and two approaches to the Young measures, we give outline of the theory needed in the sequel. The third section deals with the sequences of m-oscillating functions, i.e. functions being sums of diffeomorphisms that have continuously differentiable inverses. We introduce the notion of a \emph{total slope} of such oscillating function and observe that the set of the Young measures associated with the elements of the set m-oscillating functions, defined on an open set $\varOmega\subset\mathbb R^d$ with values in the compact set
$K\subset\mathbb R^l$, is relatively weakly compact. The last section begins with recollection of the basic facts concerning weak sequential convergence of functions and measures, in particular the theorem of Jean Dieudonn\'e. Then we introduce the notion of a density of a (not necessarily homogeneous) Young measure. We illustrate this notion with several examples; in particular, one of them makes use of the Bressan-Colombo-Fryszkowski theorem about existence of a continuous selection of a multifunction with decomposable values. We prove that density of a Young measure is a weakly sequentially compact set.

The final part of the fourth section is devoted to homogeneous Young measures and again to m-oscillating functions. First it is shown, that if we additio\-nally assume convexity of the compact set $K$ and consider such Borel functions from $\varOmega$ to $K$, that the Young measures associated with them are homogeneous and absolutely continuous with respect to the Lebesgue measure on $K$, then the norm closedness (in $\textnormal{rca}(K)$ with the total variation norm) of the set of such Young measures is equivalent with the norm closedness (in $L^1(K)$) of the set of their densities. It then follows (already without convexity assumptions on $K$), that the sequence of homogeneous Young measures with densities converges weakly to the measure of the same type if and only if the sequence of the respective densities converges weakly to the density of the limit measure. Finally, given a sequence of m-oscillating functions we show, that if their total slopes form monotonic sequence, than the sequence of the associated Young measures is weakly  convergent to a homogeneous Young measure. Moreover, this limit measure has a density which is a weak limit of the sequence of densities of the respective Young measures. Thus majority of the examples of the homogeneous Young measures generated by the sequences of oscillating functions, that can be met in the literature, turn out to be illustrations \mbox{of the special case of that result.}

\section[]{Necessary facts about Young measures}
In this section we gather information about Young measures that will be of use in the sequel. The literature concerning Young measures is large. An interested reader may consult, for example, besides the books that have already been cited, \cite{Attouch, Castaing, Florescu, Gasinski} and the references cited there. 

In this article we use the approach to Young measures described in \cite{Roubicek} in Chapter 3. 

We will denote by $\textnormal{rca}(K)$  the space of regular, countably additive scalar measures on a Borel $\sigma$-algebra of subsets of a compact set $K$. We equip this space with the norm
$\|m\|_{\textnormal{rca}(K)}:=\vert m\vert(K)$. Here $\vert\cdot\vert$ stands for the total va\-riation of the measure $m$. Then the pair
$(\textnormal{rca}(K),\|\cdot\|_{\textnormal{rca}(K)})$ is a Banach space. The Riesz representation theorem states that in this case
$\textnormal{rca}(K)$ is conjugate space to the space $\big(C(K),\|\cdot\|_\infty\big)$.

Let $(X,\mathcal A,\rho)$ be a measure space, $Y$ -- a Banach space and denote by $\langle\cdot,\cdot\rangle$ the duality pairing. Recall, that a function $f\colon X\to Y^\ast$ is \emph{weakly$^\ast$-measurable} if for any $y\in Y$ the function
\[
x\to\langle f(x),y\rangle
\]
is $\mathcal A$-measurable.

The elements of a space $L_{w^\ast}^{\infty}(\varOmega;\textnormal{rca}(K))$ are the functions 
\[
\nu\colon\varOmega\ni x\to\nu(x)\in\textnormal{rca}(K),
\]
that are weakly$^\ast$-measurable and such that
\[\textnormal{ess}\sup\bigl\{\Vert\nu (x)\Vert_{\textnormal{rca}(K)}:
x\in\varOmega\bigr\}<+\infty,
\]
where
\[
\textnormal{ess}\sup\bigl\{\Vert\nu (x)\Vert_{\textnormal{rca}(K)}:
x\in\varOmega\bigr\}:=
\inf\big\{\alpha\in\mathbb R\cup\{\infty\}:\Vert\nu (x)\Vert_{\textnormal{rca}(K)}\leq\alpha\;\textnormal{a.e. in}\,\varOmega\big\}.
\]
We endow this space with a norm
\[
\Vert\nu\Vert_{L_{w^\ast}^{\infty}(\varOmega ,\textnormal{rca}(K))}:=
\textnormal{ess}\sup\bigl\{\Vert\nu (x)\Vert_{\textnormal{rca}(K)}:
x\in\varOmega\bigr\}.
\]
It is proved in \cite{Roubicek} that the space 
$\big(L^1(\varOmega,C(K))\big)^\ast$ is isometrically isomorphic to the space $L_{w^\ast}^{\infty}(\varOmega;\textnormal{rca}(K))$.

The set $\mathcal Y(\varOmega,K)$ of the \emph{Young measures} consists of those functions from the space 
$\big(L_{w^\ast}^{\infty}(\varOmega;\textnormal{rca}(K)), \Vert\nu\Vert_{L_{w^\ast}^{\infty}(\varOmega ,\textnormal{rca}(K))}\big)$, whose values are probability measures on $K$. 
The Theorem 3.1.6 allows one to conclude that for \emph{any} measurable function
$f\colon\varOmega\to K$ there exists a Young measure $\nu^f$ associated with this function.

A particularly simple, yet important, example of a Young measure is a \emph{homogeneous Young measure}. This is a case, when 'the family of probability measures 
$\nu=(\nu_x)_{x\in\varOmega}$ reduces to a unique single measure: $\nu_x=\nu$ for a.e 
$x\in\varOmega$', to cite from page 9 of \cite{Pedregal}. In this case the set 
$\mathcal Y(\varOmega,K)$ is a subset of the set of probability measures on $K$. 
Majority of the specific examples of the Young measures that can be found in the literature are homogeneous ones, see for example \cite{Florescu}, Example 3.44; \cite {Muller}, paragraphs 3.2, 3.3, 4.6, 6.2; \cite{Pedregal}, second and third paragraph of the first chapter, second and fourth paragraphs of chapter 9; \cite{Roubicek} pages 116, 117. 

Let $\varOmega\subset\mathbb R^d$ be a bounded open set with the Lebesgue measure $dx$ and let $M>0$ be a Lebesgue measure of $\varOmega$. Define
$d\lambda(x):=\tfrac 1 Mdx$ and consider a compact set $K\subset\mathbb R^l$ with the Lebesgue measure $d\mu$ and a Borel measurable function $f\colon\varOmega\to K$. Due to the fact that homogeneous Young measures are 'one-element families', we will write '$\nu$' instead of 
'$\nu=(\nu_x)_{x\in\varOmega}$' for such measures and denote by '$\nu^f$' the homogeneous Young measure associated with the function $f$.

According to the Convention 3.1.1, the Theorem 3.1.6 in \cite{Roubicek} and the Theorem 3.1 in \cite{Puchala2} we will use the following definition of the homogeneous Young measure.
\begin{definition}
\begin{itemize}
\item[(i)]
we say that a mapping $\nu\in\mathcal Y(\varOmega,K)$ is a \emph{homogeneous Young measure} if it is constant on $\varOmega$;
\item[(ii)]
let $\nu^f$ be a Young measure associated with a Borel function $f\colon\varOmega\to K$. We say that $\nu^f$ is a \textnormal{homogeneous Young measure}  if it is constant on $\varOmega$ and is an image of the measure $\lambda$ under $f$, i.e. $\nu^f=\lambda\circ f^{-1}$.
\end{itemize}
\end{definition}

\section[]{Oscillating functions}\label{oscfun}
Let $\varOmega\subset\mathbb R^d$ be a nonempty, bounded open set of the Lebesgue measure $M>0$. Consider $\{\varOmega\}$ -- an open partition of $\varOmega$ into at most countable number of open subsets $\varOmega_1,\varOmega_2,\dots,\varOmega_n,\dots$ such that
\begin{itemize}
\item[(i)]
the elements of $\{\varOmega\}$ are pairwise disjoint;
\item[(ii)]
$\bigcup\limits_{i}\overline{\varOmega}_i=\overline{\varOmega}$, where $\overline{A}$ denotes the closure of the set $A$.
\end{itemize}

Let us consider   functions $f_i\colon\varOmega_i\to K\subset\mathbb{R}^d$, $i=1,2,...$, with inverses $f_i^{-1}$  that are  continuously differentiable on $f(\varOmega_i)$ and let $K_i:=\overline{f_i(\varOmega_i)}$ be compact. Denote for each  $i=1,2,...$ the Jacobian of $f_i^{-1}$ by $J_{f_i^{-1}}$.
\begin{definition}\label{m-oscill}
We say that a function  $f\colon\varOmega\to K$, with $K:=\overline{f(\varOmega)}$ compact, is an \textnormal{m-oscillating function} if it is of the form
\begin{equation}\label{uncountable}
f(x)=\sum\limits_{i} f_i(x)\chi_{\varOmega_i}(x). 
\end{equation}
\end{definition}
\begin{remark}
The letter 'm' in the above definition refers to the fact, that in one dimensional case both the functions $f_i$ and $f_i^{-1}$ are strictly monotonic ones. This is to distinguish from the case when oscillating functions are piecewise constant, for example the Rademacher functions.
\end{remark}
Then there holds the following theorem (see Proposition 5.1. in \cite{GrzybowskiPuchala}).
\begin{theorem}\label{Prop}
The Young measure associated with function $f$ satisfying (\ref{uncountable}) is a homogeneous one and its density $g$  with respect to the Lebesgue measure on $K$ is given by the formula
\begin{equation}\label{density}
g(y)=\frac 1 M\sum\limits_{i:y\in K_i }\vert J_{f_i^{-1}}(y)\vert.
\end{equation}
\end{theorem}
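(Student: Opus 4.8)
The plan is to compute the weak\$^\ast\$ limit of the sequence \$\bigl(\varphi(f(\cdot))\bigr)\$ directly, using the change of variables formula, and to recognise the resulting linear functional on \$C(K)\$ as integration against the measure with density \$g\$ given by (\ref{density}). First I would fix an arbitrary \$\varphi\in C(K)\$ and an arbitrary \$w\in L^1(\varOmega)\$ (or, in the homogeneous case, it suffices to test against \$w\equiv 1\$ after localising, but I will keep \$w\$ to follow the definition literally). Since \$f\$ is defined piecewise by the \$f_i\$ on the cells \$\varOmega_i\$ of the partition \$\{\varOmega\}\$, I would write
\[
\int_{\varOmega}\varphi\bigl(f(x)\bigr)\,dx=\sum_i\int_{\varOmega_i}\varphi\bigl(f_i(x)\bigr)\,dx,
\]
which is justified because the \$\varOmega_i\$ are pairwise disjoint and their closures cover \$\overline\varOmega\$, so the complement of \$\bigcup_i\varOmega_i\$ has Lebesgue measure zero. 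On each cell I would apply the change of variables theorem for multiple integrals to the \$C^1\$-invertible map \$f_i\colon\varOmega_i\to f_i(\varOmega_i)\$: substituting \$y=f_i(x)\$, i.e. \$x=f_i^{-1}(y)\$, gives
\[
\int_{\varOmega_i}\varphi\bigl(f_i(x)\bigr)\,dx=\int_{f_i(\varOmega_i)}\varphi(y)\,\bigl|J_{f_i^{-1}}(y)\bigr|\,dy=\int_{K_i}\varphi(y)\,\bigl|J_{f_i^{-1}}(y)\bigr|\,dy,
\]
the last equality holding because \$K_i=\overline{f_i(\varOmega_i)}\$ differs from \$f_i(\varOmega_i)\$ by a null set (the boundary), on which the bounded integrand contributes nothing.

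Next I would interchange the sum and the integral. Writing \$\chi_{K_i}\$ for the indicator of \$K_i\$, the previous display becomes \$\int_K\varphi(y)\,\chi_{K_i}(y)\,|J_{f_i^{-1}}(y)|\,dy\$, and summing over \$i\$ I would use Tonelli's theorem (all summands of the rearranged series being nonnegative once \$\varphi\$ is taken nonnegative, then splitting a general \$\varphi\$ into \$\varphi^+-\varphi^-\$) to obtain
\[
\int_{\varOmega}\varphi\bigl(f(x)\bigr)\,dx=\int_K\varphi(y)\Bigl(\sum_{i:\,y\in K_i}\bigl|J_{f_i^{-1}}(y)\bigr|\Bigr)dy=M\int_K\varphi(y)\,g(y)\,dy.
\]
Dividing by \$M=|\varOmega|\$ identifies the functional \$\varphi\mapsto\frac1M\int_\varOmega\varphi(f)\,dx\$ with \$\varphi\mapsto\int_K\varphi\,g\,d\mu\$. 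By the definition of the (homogeneous) Young measure recalled above — \$\nu^f=\lambda\circ f^{-1}\$, the pushforward of the normalised measure \$\lambda=\frac1M\,dx\$ — the left-hand functional is exactly \$\varphi\mapsto\int_K\varphi\,d\nu^f\$. Since two finite regular Borel measures on the compact set \$K\$ that integrate every \$\varphi\in C(K)\$ to the same value coincide (Riesz representation), this shows \$\nu^f\$ is absolutely continuous with respect to Lebesgue measure on \$K\$ with density \$g\$, and homogeneity is built into the representation \$\nu^f=\lambda\circ f^{-1}\$, which does not depend on \$x\$. Incorporating a general weight \$w\$ changes nothing structurally: one repeats the computation with \$\varphi(f_i(x))w(x)\$ and, after the substitution, \$w\$ becomes \$w(f_i^{-1}(y))\$ inside the integral, and the characterisation of \$\overline\varphi\$ as \$\int_K\varphi\,g\,d\mu\$ is what one reads off.

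The main obstacle I anticipate is the bookkeeping around the countable partition and the application of the change of variables theorem on sets that are merely open rather than, say, having smooth boundary: one must be careful that \$f_i\$ is a genuine \$C^1\$-diffeomorphism onto its image (the hypothesis gives \$f_i^{-1}\in C^1\$ on \$f_i(\varOmega_i)\$, which does the job, but one should note that \$f_i(\varOmega_i)\$ is open by invariance of domain, so its boundary is null and the passage from \$f_i(\varOmega_i)\$ to \$K_i\$ is harmless), and that the interchange of summation and integration is legitimate — this is why I would route the argument through nonnegative integrands and Tonelli rather than attempting to bound things uniformly. A secondary point worth a line is finiteness: since \$\nu^f\$ is a probability measure, taking \$\varphi\equiv1\$ in the identity above yields \$\int_K g\,d\mu=1\$, so \$g\in L^1(K)\$ automatically, which also retroactively validates the use of Fubini/Tonelli. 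With these points addressed the proof is essentially the display chain above; everything else is routine.
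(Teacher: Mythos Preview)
Your argument is correct and is essentially the approach the paper has in mind: the paper does not prove this theorem in the text but cites Proposition~5.1 of \cite{GrzybowskiPuchala}, and in the introduction it explicitly says that ``the change of variable theorem for multiple integrals plays central role'' in those results. Your computation---splitting the integral over the cells $\varOmega_i$, applying the change of variables $y=f_i(x)$ on each cell, and then summing via Tonelli to identify $\nu^f=\lambda\circ f^{-1}$ with the measure $g\,d\mu$---is exactly that route, and the bookkeeping you flag (null boundaries, legitimacy of the sum--integral interchange, $g\in L^1(K)$ from $\varphi\equiv 1$) is handled correctly.
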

The above result suggests introducing the following notion.
\begin{definition}\label{tslope}
Let an oscillating function $f$ be given by the equation (\ref{uncountable}). The 
\textnormal{total slope} $Jt_f$ of $f$ is defined by
\[
Jt_f(y):=\sum\limits_{i:y\in K_i }\vert J_{f_i^{-1}}(y)\vert.
\]
\end{definition}
\begin{example}\label{sinus}
\begin{itemize}
\item[(i)]
consider an example on page 117 in \cite{Roubicek} of a sequence $(f_n)$ of m-oscillating nonperiodic functions, where for each
$n\in\mathbb N$ we have
\[
f_n(x):=
\begin{cases}
\big(x(n+k-1)-k+1\big)\tfrac{n+k}{n}, & x\in
\Big(\tfrac{k-1}{n+k-1},\tfrac{k}{n+k}\Big),\; k\in\mathbb{N}\;\textnormal{odd}\\
\big(k-x(n+k)\big)\tfrac{n+k-1}{n}, & 
x\in\Big[\tfrac{k-1}{n+k-1},\tfrac{k}{n+k}\Big),\; k\in\mathbb{N}\;\textnormal{even}.
\end{cases}
\]
Then the sequence $(Jt_{f_n})$ of the respective total slopes is constant with each element equal to 1. It means that the Young measure associated with each $f_n$ (and therefore the Young measure generated by the sequence $(f_n)$) is a homogeneous one, absolutely continuous with respect to the Lebesgue measure on $[0,1]$ with density that is equal to 1 a.e;
\item[(ii)]
let $f_n(t)=\sin(2n\pi t),\,t\in(0,1),\,n\in\mathbb N$. Then 
\[
\forall n\in\mathbb N\quad Jt_{f_n}(y)=\tfrac{1}{\pi\sqrt{1-y^2}},
\]
and thus the Young measure associated with each $f_n$ is homogeneous and absolutely continuous with respect to the Lebesgue measure. Its density is equal to $Jt_{f_n}$.
\end{itemize}
\end{example}
The following result follows from Theorem 1.64 in \cite{Florescu} and Theorem \ref{Prop}.
\begin{theorem}\label{oscill-rwc}
Consider the set $\mathcal A$ of m-oscillating functions defined on a nonempty, bounded open set 
$\varOmega\subset\mathbb R^d$ of positive Lebesgue measure, having values in a compact set $K\subset\mathbb R^d$ . Then the set of the Young measures associated with the elements of $\mathcal A$ is relatively weakly compact.
\end{theorem}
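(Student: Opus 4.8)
The plan is to realise the set in question as a subset of a set already known to be weakly compact, so that the conclusion reduces to the trivial remark that a subset of a relatively compact set is relatively compact; the real content has in effect been placed in Theorem \ref{Prop}.

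First I would check that each m-oscillating function $f$ given by (\ref{uncountable}) is a Borel measurable map from $\varOmega$ into the compact set $K$: on every cell $\varOmega_i$ of the partition $\{\varOmega\}$ it coincides with $f_i$, which is (the inverse of) a continuously differentiable map, hence Borel, and the cells together with the Lebesgue-null set $\bigcup_i(\overline{\varOmega}_i\setminus\varOmega_i)$ form an at most countable Borel decomposition of $\varOmega$. Consequently Theorem 3.1.6 in \cite{Roubicek}, recalled in Section 2, applies and provides, for each $f\in\mathcal A$, an associated Young measure $\nu^f\in\mathcal Y(\varOmega,K)$; by Theorem \ref{Prop} this $\nu^f$ is homogeneous, i.e. a single probability measure on $K$, absolutely continuous with respect to the Lebesgue measure on $K$ with density (\ref{density}). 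In particular $\{\nu^f:f\in\mathcal A\}\subset\mathcal Y(\varOmega,K)$.

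Next I would invoke Theorem 1.64 in \cite{Florescu}: because $K$ is compact, the whole set $\mathcal Y(\varOmega,K)$ of Young measures is weakly compact, viewed as a subset of $\big(L^1(\varOmega,C(K))\big)^\ast\cong L_{w^\ast}^\infty(\varOmega;\textnormal{rca}(K))$, which in the homogeneous case reduces to $\textnormal{rca}(K)$. Since a subset of a weakly compact set is relatively weakly compact, the set of Young measures associated with the elements of $\mathcal A$ is relatively weakly compact, which is the assertion.

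The argument needs no estimate or construction, so there is no genuine obstacle; the only points demanding attention are bookkeeping ones — verifying Borel measurability of $f$ when the partition $\{\varOmega\}$ is countably infinite, so that $\nu^f$ is well defined, and making sure that the ambient space and the topology in which ``weakly compact'' is meant coincide with those of Theorem 1.64 in \cite{Florescu}. The substantive fact, namely that these Young measures are homogeneous and of the explicit form (\ref{density}), is exactly what Theorem \ref{Prop} supplies.
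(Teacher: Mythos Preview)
Your argument is correct and coincides with the paper's own: the paper simply states that the result follows from Theorem 1.64 in \cite{Florescu} together with Theorem \ref{Prop}, and you have merely unpacked this by checking Borel measurability of $f$, noting $\nu^f\in\mathcal Y(\varOmega,K)$, and then invoking the weak compactness of $\mathcal Y(\varOmega,K)$ so that any subset is relatively weakly compact.
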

\section[]{Weak sequential convergence of functions and measures}
We will now recall classical theorems concerning weak sequential convergence of functions and measures that will be needed in the sequel. The expression 'weak convergence of the sequence of measures' will be meant as the 'weak convergence of the sequence of measures as elements of the Banach space $\textnormal{rca}(K)$' (with the total variation norm). 

Recall that if $\rho$ is a measure on $K$ and for some function $w\colon K\to\mathbb{R}$ integrable with respect to the measure $\xi$ there holds: for any Borel subset $A$ of $K$ we have $\rho(A)=\int_Aw(y)d\xi(y)$, then the function $w$ is called a \emph{density} of the measure $\rho$. In this case $\rho$ is \emph{absolutely continuous} with respect to $\xi$ (shortly: $\xi$-continuous): $\xi(A)=0\Rightarrow\rho(A)=0$.

Let $(X,\mathcal A,\rho)$ be a measure space and consider a sequence $(u_n)$ of scalar functions defined on $X$ and integrable with respect to the measure $\rho$ (that is, $\forall n\in\mathbb N\; f_n\in L^1_\rho(X)$) and a function $u\in L^1_\rho(X)$. Recall that $(u_n)$ converges weakly sequentially to $u$ if
\[
\forall g\in L^\infty_\rho(X)\quad \lim\limits_{n\to\infty}\int\limits_Xu_ngd\rho=\int\limits_Xugd\rho.
\]
The following theorem characterizes weak sequential $L^1$ convergence of functions and weak convergence of measures. We refer the reader to \cite{Florescu, Czaja, Diestel}.
\begin{theorem}\label{CzajaDiestel}
\begin{itemize}
\item[(a)](J. Dieudonn\'e, 1957) 
let $X$ be a locally compact Hausdorff space and $(X,\mathcal A,\rho)$ -- a measure space with $\rho$ regular. A sequence 
$(u_n)\subset L^1_\rho(X)$ converges weakly to some $u\in L^1_\rho(X)$ if and only if\, $\forall A\in\mathcal A$ the limit
\[
\lim\limits_{n\to\infty}\int\limits_Au_nd\rho
\]
exists and is finite;
\item[(b)]
let $X$ be a locally compact Hausdorff space and denote by $\mathcal B(X)$ the $\sigma$-algebra of Borel subsets of $X$. A sequence $(\rho_n)$ of scalar measures on $\mathcal B(X)$ converges weakly to some scalar measure $\rho$ on $\mathcal B(X)$ if and only if\, $\forall A\in\mathcal B(X)$ the limit
\[
\lim\limits_{n\to\infty}\rho_n(A)
\]
\nopagebreak exists and is finite.
\end{itemize}
\end{theorem}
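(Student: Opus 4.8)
The plan is to prove both parts by one and the same scheme: necessity is immediate from the definitions, and sufficiency rests on the Vitali--Hahn--Saks (Nikod\'ym) theorem together with the Radon--Nikod\'ym and Dunford--Pettis theorems.

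First I would settle necessity. If $u_n\rightharpoonup u$ weakly in $L^1_\rho(X)$, then for each fixed $A\in\mathcal A$ the indicator $\chi_A$ lies in $L^\infty_\rho(X)$, so $\int_A u_n\,d\rho=\int_X u_n\chi_A\,d\rho\to\int_X u\chi_A\,d\rho=\int_A u\,d\rho$, a finite limit. In part (b) one argues the same way using that, for fixed $A$, the evaluation $m\mapsto m(A)$ is a bounded linear functional on $\textnormal{rca}(X)$, so weak convergence $\rho_n\rightharpoonup\rho$ forces $\rho_n(A)\to\rho(A)$.

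For sufficiency in part (a) I would put $\mu_n(A):=\int_A u_n\,d\rho$ and $\mu(A):=\lim_n\mu_n(A)$, the latter existing and finite by hypothesis; each $\mu_n$ is countably additive with $\mu_n\ll\rho$. Since $\sup_n|\mu_n(A)|<\infty$ for every $A$, the Nikod\'ym boundedness theorem gives $\sup_n\|u_n\|_{L^1_\rho(X)}=\sup_n|\mu_n|(X)<\infty$. The Vitali--Hahn--Saks theorem then shows that the family $\{\mu_n\}$ is uniformly $\rho$-continuous and that $\mu$ is itself countably additive with $\mu\ll\rho$, so by Radon--Nikod\'ym $\mu(A)=\int_A u\,d\rho$ for some $u\in L^1_\rho(X)$. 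Uniform $\rho$-continuity together with the uniform $L^1$ bound makes $(u_n)$ uniformly integrable, hence relatively weakly sequentially compact by the Dunford--Pettis theorem; and any weak cluster point $v$ of $(u_n)$ satisfies $\int_A v\,d\rho=\mu(A)=\int_A u\,d\rho$ for all $A$, whence $v=u$ $\rho$-a.e. A relatively weakly compact sequence with a unique cluster point converges weakly to it, so $u_n\rightharpoonup u$. Part (b) would follow verbatim with $L^1_\rho(X)$ replaced by $\textnormal{rca}(X)$: after producing a common control measure $\lambda$ for the $\rho_n$ (possible once boundedness is known), the same VHS/Nikod\'ym convergence argument makes the set-wise limit $\rho$ countably additive and $\{\rho_n\}$ uniformly countably additive, the Bartle--Dunford--Schwartz criterion for relative weak compactness in spaces of countably additive measures replaces Dunford--Pettis, and the unique cluster point is $\rho$; regularity of $\rho$ is inherited because $\textnormal{rca}(X)$ is norm-closed, hence weakly closed, in the space of all countably additive scalar measures on $\mathcal B(X)$.

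The one genuinely nontrivial step, and hence the main obstacle, is the passage from mere set-wise convergence of $(\mu_n)$ to uniform absolute continuity (uniform countable additivity) — that is, the Vitali--Hahn--Saks theorem itself — which is obtained by a Baire-category argument on the complete metric space of measurable sets equipped with $d_\rho(A,B):=\rho(A\triangle B)$ (respectively on $L^1$ of a common control measure for the $\rho_n$). Everything else — necessity, Radon--Nikod\'ym, and the uniform-integrability identification of the weak limit — is routine; the full details may be found in \cite{Diestel, Florescu, Czaja}.
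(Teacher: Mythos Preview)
The paper does not prove this theorem at all: it is quoted as a classical result, and the reader is simply referred to \cite{Florescu, Czaja, Diestel}. There is therefore no ``paper's own proof'' to compare your proposal against.

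That said, your sketch is essentially the standard argument and is sound in outline. Necessity is trivial, and for sufficiency the chain Nikod\'ym boundedness $\Rightarrow$ Vitali--Hahn--Saks $\Rightarrow$ uniform $\rho$-continuity $\Rightarrow$ Dunford--Pettis (or Bartle--Dunford--Schwartz in the measure case) $\Rightarrow$ unique weak cluster point is exactly how these results are established in the cited references. One small point worth tightening in part~(b): your claim that the limit measure $\rho$ is automatically regular because $\textnormal{rca}(X)$ is norm-closed (hence weakly closed) in $\textnormal{ca}(\mathcal B(X))$ is correct as stated, but you should be explicit that ``weakly closed'' here refers to the weak topology of the ambient Banach space $\textnormal{ca}(\mathcal B(X))$, and that a closed subspace is indeed weakly closed; alternatively, on a locally compact Hausdorff space one can invoke that the setwise limit of a uniformly countably additive sequence of regular measures is regular. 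Either route works, but the sentence as written is a bit compressed.
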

\begin{corollary}\label{weakconvmeas}
\begin{itemize}
\item[(a)]
let $(\rho_n)$ be a sequence of measures having respective densities $u_n$, $n\in\mathbb N$. Then the sequence $(u_n)$ is weakly convergent in $L^1(X)$ to some function $h$ if and only if the sequence $(\rho_n)$ is weakly convergent to some measure $\eta$;
\item[(b)]
assume additionally, that $X\subset\mathbb R^l$ is compact and let $(\rho_n)$ be a sequence of homogeneous Young measures having respective densities $u_n$. Then the sequence $(u_n)$ is weakly convergent in $L^1(X)$ to some function $h$ if and only if the sequence $(\rho_n)$ is weakly convergent to some measure $\eta$.
\end{itemize}
\end{corollary}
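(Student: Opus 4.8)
The plan is to reduce both parts to Dieudonn\'e's theorem (Theorem~\ref{CzajaDiestel}), the point being that the two equivalences there are tested against \emph{the same} family of numbers once one unravels the definition of a density. Let $\xi$ denote the reference measure on $X$ with respect to which the densities are taken, so that $L^1(X)=L^1_\xi(X)$ and, for every measurable $A$,
\[
\rho_n(A)=\int_A u_n\,d\xi .
\]

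For part (a) I would argue as follows. Each $\rho_n$ is a finite measure, hence $u_n\in L^1_\xi(X)$, and the standing hypotheses of Theorem~\ref{CzajaDiestel} ($X$ locally compact Hausdorff, $\xi$ regular) are in force; so part (a) of that theorem says that $(u_n)$ converges weakly in $L^1(X)$ to some $h$ if and only if $\lim_n\int_A u_n\,d\xi$ exists and is finite for every $A\in\mathcal A$, while part (b) says that $(\rho_n)$ converges weakly to some measure $\eta$ if and only if $\lim_n\rho_n(A)$ exists and is finite for every Borel set $A$. Since $\rho_n(A)=\int_A u_n\,d\xi$ and $\mathcal A=\mathcal B(X)$, these are the very same condition, and the equivalence in (a) follows. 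To pin down the limit, note that the weak $L^1$ convergence $u_n\rightharpoonup h$, tested against $\chi_A\in L^\infty_\xi(X)$, gives $\rho_n(A)=\int_A u_n\,d\xi\to\int_A h\,d\xi$; comparing with $\rho_n(A)\to\eta(A)$ yields $\eta(A)=\int_A h\,d\xi$ for every Borel $A$, i.e. $h$ is a density of $\eta$ (no uniqueness argument is needed, only this identity).

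For part (b) I would simply specialize (a) to $X=K$, a compact subset of $\mathbb R^l$ — in particular locally compact Hausdorff — with $\xi$ the Lebesgue measure on $K$ (which is regular) and $u_n$ the density of the homogeneous Young measure $\rho_n$; part (a) then delivers the asserted equivalence. I would moreover remark that, each $\rho_n$ being a probability measure on $K$, the limit satisfies $\eta(A)=\lim_n\rho_n(A)\ge 0$ for every Borel $A$ and $\eta(K)=\lim_n\rho_n(K)=1$; hence $\eta$ is again a homogeneous Young measure, whose density is precisely the weak $L^1$ limit $h$ of $(u_n)$.

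I do not anticipate a genuine difficulty here: all the analytic substance sits in Dieudonn\'e's theorem, which is quoted. The only matters needing attention are bookkeeping: verifying that the densities really belong to $L^1_\xi(X)$ (finiteness of the $\rho_n$), that the hypotheses of both halves of Theorem~\ref{CzajaDiestel} hold in the case at hand, and — most importantly — that the $\sigma$-algebra $\mathcal A$ carrying the functions $u_n$ coincides with the Borel $\sigma$-algebra carrying the measures $\rho_n$, so that the quantifiers ``for every $A\in\mathcal A$'' and ``for every Borel $A$'' literally agree. If one prefers not to assume $\mathcal A=\mathcal B(X)$ at the outset, it costs nothing to restrict the whole discussion to Borel sets.
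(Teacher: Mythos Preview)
Your argument is correct and is exactly the approach the paper intends: the corollary is stated immediately after Theorem~\ref{CzajaDiestel} with no proof, precisely because the two characterizations there collapse to the same condition via $\rho_n(A)=\int_A u_n\,d\xi$, which is what you spell out. The extra remarks you add in (b) about $\eta$ being a probability measure with density $h$ anticipate the later Corollary~\ref{concl-weakconvYM} rather than belonging to this statement, but they are correct and harmless.
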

We now introduce the notion of a density of a Young measure. We remember, that Young measure is in fact a family $\nu=(\nu_x)_{x\in\varOmega}$. In a special case, when this family consists of one element only (i.e. it does not depend on the variable $x$) we say about homogeneous Young measure.

Let $\varOmega$ be a nonempty, bounded open subset of $\mathbb R^n$, $K$ -- compact subset of $\mathbb R^l$. 
The Borel $\sigma$-algebra of subsets of $K$ will be denoted $\mathcal B(K)$.
\begin{definition}\label{ymdens}
We say that a family $h=(h_x)_{x\in\varOmega}$ is a density of a Young measure $\nu$ with respect to the measure $\xi$ defined on 
$\mathcal B(K)$ if for any $x\in\varOmega$ the function $h_x$ is a density of the measure $\nu_x$ i.e. for any $A\in\mathcal B(K)$ there holds $\nu_x(A)=\int_Ah_x(y)d\xi(y)$.
\end{definition}
The next result follows directly from the above definition.
\begin{proposition}\label{propymdens}
Let $\nu$ be a Young measure and let $h=(h_x)_{x\in\varOmega}$ be a density of $\nu$ with respect to the measure $\xi$. Then $\nu$ is a homogeneous Young measure if and only if the family $h=(h_x)_{x\in\varOmega}$ consists of one element only, up to the set of $\xi$-measure $0$. 
\end{proposition}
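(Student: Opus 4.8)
The plan is to prove both implications directly from Definition \ref{ymdens} together with the defining property of a homogeneous Young measure (constancy in $x$, cf. Definition \ref{ymdens}'s preamble). The only nontrivial ingredient is the elementary measure-theoretic fact that a $\xi$-integrable function whose integral over every Borel set vanishes is $\xi$-a.e. zero.

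For the implication ``$\Leftarrow$'' I would assume that there is a single function $h_0\in L^1_\xi(K)$ with $h_x=h_0$ $\xi$-a.e. for (a.e.) $x\in\varOmega$ — this is what ``the family consists of one element only, up to the set of $\xi$-measure $0$'' means. Then, for such $x$ and every $A\in\mathcal B(K)$, Definition \ref{ymdens} gives $\nu_x(A)=\int_A h_x\,d\xi=\int_A h_0\,d\xi$, a quantity that does not depend on $x$. Hence $\nu_x$ is one and the same probability measure for a.e. $x$, i.e. $\nu$ is constant on $\varOmega$, which is precisely homogeneity.

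For ``$\Rightarrow$'' I would assume $\nu$ homogeneous, so that there is a probability measure $\nu_0$ on $\mathcal B(K)$ with $\nu_x=\nu_0$ for a.e. $x\in\varOmega$. Fixing two such points $x,x'$, Definition \ref{ymdens} yields $\int_A h_x\,d\xi=\nu_x(A)=\nu_0(A)=\nu_{x'}(A)=\int_A h_{x'}\,d\xi$ for every $A\in\mathcal B(K)$, hence $\int_A(h_x-h_{x'})\,d\xi=0$ for all such $A$. Applying the measure-theoretic fact above to $g:=h_x-h_{x'}$ (testing on $A=\{g>0\}$ and $A=\{g<0\}$) gives $h_x=h_{x'}$ $\xi$-a.e.; since $x,x'$ range over a set of full measure in $\varOmega$, the family $(h_x)_{x\in\varOmega}$ reduces to a single element of $L^1_\xi(K)$ up to $\xi$-null sets.

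The only point requiring any care — and hence the ``main obstacle'', though a very mild one — is the bookkeeping of ``a.e. $x$'' versus ``every $x$''. I would read homogeneity through the convention fixed just before Definition \ref{ymdens}, namely $\nu_x=\nu$ for a.e. $x$, so that the argument in ``$\Rightarrow$'' is carried out with $x,x'$ in a set of full measure, and I would note that a density $h=(h_x)_{x\in\varOmega}$ is in any case only determined up to $\xi$-null modifications of each $h_x$, so ``one element only'' is automatically understood in the $L^1_\xi(K)$ sense; with that convention the equivalence follows with no further work.
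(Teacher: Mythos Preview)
Your argument is correct and is precisely the unpacking of Definition~\ref{ymdens} together with the a.e.\ uniqueness of densities; this is exactly the paper's approach, which simply records that the proposition ``follows directly from the above definition'' without writing out the details you supply.
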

\begin{example}
\begin{itemize}
\item[(i)]
consider a function $f\colon(0,1)\to[0,1]$ given by the formula
\[
f(x):=
\begin{cases}
2x, & \textnormal{if}\,x\in\big(0,\tfrac 1 2\big)\\
\frac 1 2 & \textnormal{if}\,x\in\big[\tfrac 1 2,1\big).
\end{cases}
\]
Then the Young measure associated with $f$ has no density;
\item[(ii)]
the density of the Young measure associated with each element of the sequence $(f_n)$, where 
$f_n(x):=\sin(2n\pi x),\;x\in(0,1)$, is a 'one element family'  $h=(h_x)_{x\in\varOmega}$, with 
$h(y)=\tfrac{1}{\pi\sqrt{1-y^2}}$;
\item[(iii)]
let $\varOmega=(0,1)$, $K=[0,2]$ and denote by $\mu$ a Lebesgue measure on $\mathcal B(K)$. Consider a family $h=(h_x)_{x\in\varOmega}$ of functions defined as follows:
for each $x\in\varOmega$ and $y\in K$
\[
h_x(y):=
\begin{cases}
\frac 1 xy & \textnormal{if}\,y\in[0,x)\\
-\frac{1}{1-x}y+\frac{1}{1-x} & \textnormal{if}\,y\in[x,1).
\end{cases}
\]
Then $h=(2h_x)_{x\in\varOmega}$ is a density of a nonhomogeneous Young measure 
$\nu=(\nu_x)_{x\in\varOmega}$, where for each $x\in\varOmega$ we have $\nu_x:=2h_x\mu(dy)$;
\item[(iv)]
for this generalization of the above example we need some of the basic notions of the set-valued analysis. All the necessary set-valued theory can be found, for example, in \cite{Denkowski, Fryszko, Hu, Repovs}.

Let $\varOmega$ and $K$ be given nonempty sets.
By a \textnormal{multivalued mapping} or \textnormal{multifunction} we mean a mapping \mbox{$T\colon\varOmega\to 2^K$,} often denoted by \mbox{$T\colon\varOmega\leadsto K$.}  For given set $A\subset K$ denote 
$T^-(A):=\{x\in\varOmega: T(x)\cap A\neq\emptyset\}$. We say that a multifunction $T$ is \textnormal{lower semicontinuous} at $x_0\in\varOmega$ if and only if for every open set $V\subset K$ such that $x_0\in T^-(V)$, $x_0$ is an interior point of $T^-(V)$. A \textnormal{selection} of a multifunction $T$ is a single valued function $t\colon\varOmega\to K$ such that for all $x\in\varOmega$ there holds $t(x)\in T(x)$. By the Axiom of Choice every multifunction has a selection, but one of the main problems of set-valued analysis is investigation of the existence of selections having certain regularity properties, like measurability or continuity.

We say that a set $D\subset L^1(K)$ is \textnormal{decomposable}, if for all $u,v\in D$ and all 
$A\in\mathcal B(K)$ the function $\chi_A\cdot u+\chi_{K\setminus A}\cdot v$ is an element of $D$; the symbol $\chi_A$ stands for the characteristic function of the set $A$.

Let $\varOmega=[0,1]$, $K=[0,1]$, $\mu$ -- a Lebesgue measure on $\mathcal B(K)$ and let $T\colon\varOmega\leadsto L^1(K)$ be a lower semiconti\-nuous multifunction having nonempty, closed and decomposable values. By the Bressan-Colombo-Fryszkowski theorem, see for instance \cite{Fryszko}, Theorem 42 or \cite {Repovs}, Theorem (7.18), there e\-xists a continuous selection for $T$, that is a continuous function $t\colon\varOmega\to L^1(K)$ such that for each $x\in\varOmega$ there holds $t(x)\in T(x)$. The family $h=(h_x)_{x\in\varOmega}$, 
where $h_x:=t(x)$ and $t(x)\in L^1(K)$ is $\mu$-a.e. nonzero, gives rise to the density of the nonhomogeneous Young measure.
\end{itemize}
\end{example}
We will now recall two classical theorems that will be needed in the sequel. We again refer the reader to \cite{Florescu, Czaja, Diestel}.
\begin{theorem}
\begin{itemize}
\item[(a)] $($Radon-Nikodym theorem for finite measures$)$\\
let $\mu$ and $\rho$ be finite measures on a measurable space $(X,\mathcal A)$ and assume that $\rho$ is $\mu$-continuous. Then there exists a unique $($up to the set of $\mu$-measure $0$$)$ $\mu$-integrable function $w\in L^1(X)$ such that
\[
\forall A\in\mathcal A\quad\rho(A)=\int\limits_Awd\mu;
\]
\item[(b)] $($Vitali-Hahn-Saks theorem$)$\\
let $(X,\mathcal A)$ be measurable space, $\mu$ -- a nonnegative finite measure and let $(\rho_n)$ be a sequence of $\mu$-continuous scalar measures on $\mathcal A$. If for any $A\in\mathcal A$ the limit $\lim\limits_{n\to\infty}\rho_n(A)$ exists, then the formula:
\[
\forall A\in\mathcal A\quad \rho(A):=\lim\limits_{n\to\infty}\rho_n(A)
\]
defines a $\mu$-continuous scalar measure on $\mathcal A$.
\end{itemize}
\end{theorem}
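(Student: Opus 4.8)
Both assertions are classical, and I would prove them along their standard measure-theoretic routes. For part (a) I would use von Neumann's Hilbert-space argument. Put $\lambda := \mu + \rho$, which is again a finite measure, and consider on $L^2(X,\lambda)$ the linear functional $f \mapsto \int_X f\, d\rho$; by the Cauchy--Schwarz inequality and $\rho(X) \le \lambda(X) < \infty$ it is bounded, so the Riesz representation theorem for Hilbert spaces furnishes $g \in L^2(X,\lambda)$ with $\int_X f\, d\rho = \int_X f g\, d\lambda$ for all $f \in L^2(X,\lambda)$. Testing with $\chi_{\{g<0\}}$ and $\chi_{\{g\ge 1\}}$ --- the latter invoking $\mu$-continuity of $\rho$ to kill the exceptional set --- gives $0 \le g < 1$ $\lambda$-a.e. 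Rewriting the identity as $\int_X f(1-g)\, d\rho = \int_X fg\, d\mu$, substituting $f = (1+g+\cdots+g^n)\chi_A$, and letting $n\to\infty$ by the monotone convergence theorem yields $\rho(A) = \int_A w\, d\mu$ with $w := g/(1-g) \ge 0$; since $\int_X w\, d\mu = \rho(X) < \infty$ we have $w \in L^1(X)$, and uniqueness up to a $\mu$-null set is the routine fact that two $\mu$-integrable functions with equal integrals over every $A \in \mathcal A$ coincide $\mu$-a.e.

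For part (b) the heart of the matter is to upgrade the pointwise convergence of $(\rho_n)$ to \emph{uniform} $\mu$-continuity: for every $\varepsilon > 0$ there is $\delta > 0$ with $\mu(A) < \delta \Rightarrow \sup_n |\rho_n(A)| < \varepsilon$. I would work in the metric space $\Sigma$ of $\mathcal A$-sets modulo $\mu$-null sets with $d(A,B) := \mu(A \triangle B)$; via $A \mapsto \chi_A$ this is isometric to a closed subset of $L^1(X)$ and hence complete, and each $\rho_n$ --- writing $\rho_n = w_n\mu$ with $w_n \in L^1(X)$ by part (a) --- is continuous on $(\Sigma, d)$ since $|\rho_n(A) - \rho_n(B)| \le \int_{A\triangle B} |w_n|\, d\mu$. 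Fixing $\varepsilon$, the sets $F_N := \{A \in \Sigma : |\rho_m(A) - \rho_n(A)| \le \varepsilon \ \text{for all}\ m,n \ge N\}$ are closed, and they exhaust $\Sigma$ because $\lim_n \rho_n(A)$ exists for each $A$; by the Baire category theorem some $F_{N_0}$ contains a ball $B(A_0, r)$. For $A$ with $\mu(A) < r$, both $A_0 \cup A$ and $A_0 \setminus A$ stay within distance $r$ of $A_0$, so writing $\rho_m(A) = [\rho_m(A_0\cup A) - \rho_m(A_0)] + [\rho_m(A_0) - \rho_m(A_0\setminus A)]$ and differencing over $m,n \ge N_0$ bounds $|\rho_m(A) - \rho_n(A)|$ by $4\varepsilon$; combining this with the individual $\mu$-continuity of the finitely many $\rho_1,\dots,\rho_{N_0}$ produces the desired uniform $\delta$.

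Once uniform $\mu$-continuity is established, the rest is quick. Define $\rho(A) := \lim_n \rho_n(A)$; finite additivity passes to the limit immediately, and for countable additivity it suffices to check $\rho(A_k) \to 0$ whenever $A_k \downarrow \emptyset$, which holds because $\mu(A_k) \to 0$ forces $\sup_n |\rho_n(A_k)| < \varepsilon$ for $k$ large; finally $\mu(A) = 0$ implies $\rho_n(A) = 0$ for all $n$, whence $\rho(A) = 0$, so $\rho$ is $\mu$-continuous. I expect the main obstacle to be precisely the Baire-category step in (b): proving that each $F_N$ is closed (which relies on the continuity of every $\rho_n$ on $(\Sigma,d)$, and hence on part (a)) and converting the interior ball around $A_0$ into a genuine smallness estimate near $\emptyset$ through the symmetric-difference bookkeeping above. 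Everything else --- the Riesz representation in (a) and the passage to the limit measure in (b) --- is routine once that uniform control is in hand.
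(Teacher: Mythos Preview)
Your proof is correct and follows the standard routes: von Neumann's Hilbert-space argument for part (a) and the Baire-category argument on the measure algebra $(\Sigma,d)$ for part (b). The details are handled cleanly --- the use of $\mu$-continuity to kill $\{g\ge 1\}$ in (a), and the symmetric-difference bookkeeping $\rho_m(A)=\rho_m(A_0\cup A)-\rho_m(A_0\setminus A)$ to transfer the Baire ball at $A_0$ to a smallness estimate near $\emptyset$ in (b), are exactly the points where care is needed, and you address them.

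The paper, however, does not prove this theorem at all: it is stated as a recollection of two classical results, with the reader referred to \cite{Florescu, Czaja, Diestel} for proofs. So there is no ``paper's proof'' to compare against; your argument simply fills in what the paper deliberately outsources to the literature. In that sense your write-up goes well beyond what the paper requires, but it is a faithful and self-contained account of the standard proofs found in those references.
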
 
Let the family $(h_x)_{x\in\varOmega}$ be a density of a Young measure 
$(\nu_x)_{x\in\varOmega}$. We then have the following theorem.

\newpage

\begin{theorem}\label{conv_nonhom_ym}
Let $(\varOmega,\textnormal{dist})$ be a metric space and assume that the mapping
\[
h\colon\varOmega\ni x\to h(x):=h_x\in L^1(K)
\]
is $(\varOmega,\textnormal{dist})$ -- weakly-sequentially-in-$L^1(K)$ continuous. Let $(x_n)$ be a sequence in $\varOmega$ convergent to $x_0\in\varOmega$. Then the sequence $(h_{x_n})$ converges weakly to $h_{x_0}$ if and only if the sequence 
$(\nu_{x_n})$ converges weakly to the measure $\nu_{x_0}$ having density $h_{x_0}$.
\end{theorem}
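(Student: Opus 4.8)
The plan is to derive the equivalence from the characterizations of weak $L^1$ convergence and of weak convergence of measures recorded in Theorem \ref{CzajaDiestel} and Corollary \ref{weakconvmeas}, applied to the sequence of measures $\rho_n:=\nu_{x_n}$, whose densities with respect to $\xi$ are, by Definition \ref{ymdens}, precisely the functions $h_{x_n}$, while the density of $\nu_{x_0}$ is $h_{x_0}$. Since $K\subset\mathbb R^l$ is compact it is a locally compact Hausdorff space and $\xi$ may be taken regular and finite, so the hypotheses of Theorem \ref{CzajaDiestel} are satisfied and $\chi_A\in L^\infty_\xi(K)$ for every $A\in\mathcal B(K)$.

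First I would treat the implication ``$h_{x_n}\rightharpoonup h_{x_0}$ in $L^1(K)$ $\Rightarrow$ $\nu_{x_n}\rightharpoonup\nu_{x_0}$''. Testing the weak $L^1$ convergence against $\chi_A$ yields, for every $A\in\mathcal B(K)$,
\[
\nu_{x_n}(A)=\int_A h_{x_n}\,d\xi\longrightarrow\int_A h_{x_0}\,d\xi ,
\]
so the limit $\lim_n\nu_{x_n}(A)$ exists and is finite for every Borel set $A$. By the Vitali-Hahn-Saks theorem the set function $\eta(A):=\lim_n\nu_{x_n}(A)$ is a $\xi$-continuous scalar measure, and the display above exhibits $h_{x_0}$ as its density; since $h_{x_0}$ is also the density of $\nu_{x_0}$, the uniqueness clause of the Radon-Nikodym theorem forces $\eta=\nu_{x_0}$. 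Theorem \ref{CzajaDiestel}(b) (equivalently Corollary \ref{weakconvmeas}(a)) then gives $\nu_{x_n}\rightharpoonup\nu_{x_0}$ in $\textnormal{rca}(K)$.

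For the converse I would start from $\nu_{x_n}\rightharpoonup\nu_{x_0}$, with $\nu_{x_0}$ having density $h_{x_0}$. By Theorem \ref{CzajaDiestel}(b) this gives $\int_A h_{x_n}\,d\xi=\nu_{x_n}(A)\to\nu_{x_0}(A)=\int_A h_{x_0}\,d\xi$ for every $A\in\mathcal B(K)$; in particular $\lim_n\int_A h_{x_n}\,d\xi$ exists and is finite for every $A$, so Dieudonn\'e's theorem (Theorem \ref{CzajaDiestel}(a)) provides $\widetilde h\in L^1_\xi(K)$ with $h_{x_n}\rightharpoonup\widetilde h$ weakly in $L^1(K)$. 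Passing to the limit along this weak convergence, $\int_A\widetilde h\,d\xi=\int_A h_{x_0}\,d\xi$ for all $A\in\mathcal B(K)$, whence $\widetilde h=h_{x_0}$ $\xi$-a.e., i.e. $h_{x_n}\rightharpoonup h_{x_0}$.

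The only genuinely delicate step, and the one I would be most careful about, is the identification of the limiting objects: that the weak limit $\eta$ of $(\nu_{x_n})$ must be $\nu_{x_0}$ itself rather than merely some measure sharing its density, and, symmetrically, that the weak $L^1$ limit $\widetilde h$ of $(h_{x_n})$ coincides $\xi$-almost everywhere with $h_{x_0}$; both are settled by the uniqueness part of the Radon-Nikodym theorem together with the fact that a finite measure is determined by its density. I would also point out that for a fixed convergent sequence $x_n\to x_0$ the standing weak-sequential-continuity assumption on $x\mapsto h_x$ is not actually used in the argument above; its function is to guarantee that the left-hand condition of the equivalence holds for \emph{every} such sequence, so that the theorem in effect records the weak-sequential continuity of the map $x\mapsto\nu_x$ as a mapping from $(\varOmega,\textnormal{dist})$ into $\textnormal{rca}(K)$.
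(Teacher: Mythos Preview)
Your argument is correct and uses the same toolkit as the paper (Theorem~\ref{CzajaDiestel}, Corollary~\ref{weakconvmeas}, Vitali--Hahn--Saks, Radon--Nikodym). The one substantive difference is in the backward implication: the paper's four--term estimate tacitly relies on the standing weak--sequential continuity of $x\mapsto h_x$ to make the final term $\bigl|\int_A h_{x_n}\,d\mu-\int_A h_{x_0}\,d\mu\bigr|$ vanish, whereas you bypass that hypothesis entirely by invoking Dieudonn\'e's criterion to produce a weak $L^1$ limit $\widetilde h$ and then identifying $\widetilde h=h_{x_0}$ via Radon--Nikodym uniqueness. Your closing remark is therefore accurate: the biconditional itself does not need the continuity assumption, and your route makes this transparent, while the paper's version uses that assumption as a shortcut in the identification step.
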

\begin{proof}
The first part of the theorem follows from part (b) of the Corollary \ref{weakconvmeas}. Now let the sequence $(\nu_{x_n})$ converges weakly to the measure 
$\eta$. It follows from the Vitali-Hahn-Saks theorem that the measure $\eta$ is $\mu$-continuous, so by the Radon-Nikodym theorem it has a density $r$. Choose and fix set $A\in\mathcal B(K)$. The fact that $r=h_{x_0}$ up to the set of $\mu$-measure $0$ follows from arbitrariness of the choice of $A$ and the inequality 
\begin{multline}
\Big\vert\int\limits_{A}r(y)d\mu-\int\limits_{A}h_{x_0}(y)d\mu\Big\vert\leq
\Big\vert\int\limits_{A}r(y)d\mu-\eta(A)\Big\vert+\Big\vert\eta(A)-\nu_{x_n}(A)\Big\vert+\\ \notag
+\Big\vert\nu_{x_n}(A)-\int\limits_{A}h_{x_n}(y)d\mu\Big\vert+
\Big\vert\int\limits_{A}h_{x_n}(y)d\mu-\int\limits_{A}h_{x_n}(y)d\mu\Big\vert,
\end{multline}
which in turn implies that $\eta=\nu_{x_0}$.
\end{proof}
The next result follows from the above theorem and weak sequential completness of $L^1(K)$ (Theorem 1.3.13 in \cite{Lin}).
\begin{corollary}
The density $h=(h_x)_{x\in\varOmega}$ of a Young measure $\nu=(\nu_x)_{x\in\varOmega}$ is a weakly sequentially closed set.
\end{corollary}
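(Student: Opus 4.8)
The statement asks us to show that the set $S:=\{h_x:x\in\varOmega\}$, viewed as a subset of $L^1(K)$, is sequentially closed for the weak topology of $L^1(K)$: whenever a sequence $(h_{x_n})\subset S$ converges weakly in $L^1(K)$, its limit is again of the form $h_{x_0}$ for some $x_0\in\varOmega$. The plan is to begin with such a sequence $(h_{x_n})$ -- more generally, with one that is merely weakly Cauchy in $L^1(K)$ -- and to invoke the weak sequential completeness of $L^1(K)$ (Theorem 1.3.13 in \cite{Lin}) to produce a function $g\in L^1(K)$ to which $(h_{x_n})$ converges weakly; the task is then to recognise $g$ as one of the densities $h_{x_0}$.

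Next I would transfer the convergence to the level of measures. Writing $\nu_{x_n}$ for the measure with density $h_{x_n}$ (these are members of the family $\nu=(\nu_x)_{x\in\varOmega}$), part (a) of Corollary \ref{weakconvmeas} turns the weak $L^1$-convergence of $(h_{x_n})$ into weak convergence $\nu_{x_n}\to\eta$ in $\textnormal{rca}(K)$. The Vitali-Hahn-Saks theorem then makes $\eta$ absolutely continuous with respect to the reference measure, and the Radon-Nikodym theorem supplies a density $r$ of $\eta$; re-running the telescoping estimate displayed in the proof of Theorem \ref{conv_nonhom_ym}, and using the arbitrariness of the Borel set $A\subset K$, gives $r=g$ almost everywhere. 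At this point $g$ is identified as the density of the limit measure $\eta$.

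It remains to exhibit the index $x_0$. Passing to a subsequence, I would choose $x_{n_k}\to x_0$ with $x_0\in\varOmega$; the assumed weak-sequential-$L^1(K)$ continuity of the map $x\mapsto h_x$ then gives that $(h_{x_{n_k}})$ converges weakly to $h_{x_0}$, and since a subsequence of a weakly convergent sequence shares its limit, uniqueness of weak limits forces $g=h_{x_0}\in S$; equivalently, one may quote the ``only if'' direction of Theorem \ref{conv_nonhom_ym} along this subsequence once the measures $\nu_{x_{n_k}}$ are known to converge weakly. The step I expect to be the main obstacle is precisely this last one: guaranteeing that some subsequential limit $x_0$ of $(x_n)$ actually lies in $\varOmega$, and not just in $\overline{\varOmega}$ -- this forces one to use (relative) compactness of $\varOmega$ in tandem with the standing continuity hypothesis on $x\mapsto h_x$. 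By contrast, the intermediate measure-theoretic steps are routine, amounting to the $L^1$-density/measure dictionary already assembled in Corollary \ref{weakconvmeas} and Theorem \ref{conv_nonhom_ym}.
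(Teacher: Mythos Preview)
Your approach is essentially the one the paper has in mind: the paper's entire proof is the sentence ``follows from the above theorem and weak sequential completeness of $L^1(K)$'', i.e.\ exactly Theorem~\ref{conv_nonhom_ym} plus the $L^1$ completeness result you cite. Your intermediate steps (passing to measures via Corollary~\ref{weakconvmeas}, invoking Vitali--Hahn--Saks and Radon--Nikodym, and running the telescoping estimate) are not wrong, but they simply re-derive what is already packaged inside Theorem~\ref{conv_nonhom_ym}; once you have the weak $L^1$-limit $g$ and a subsequence $x_{n_k}\to x_0\in\varOmega$, the ``only if'' direction of that theorem immediately identifies $g$ with $h_{x_0}$.

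The obstacle you flag --- producing a subsequential limit $x_0$ that lies in $\varOmega$ rather than merely in $\overline{\varOmega}$ --- is genuine, and the paper's one-line proof does not address it either. Theorem~\ref{conv_nonhom_ym} is stated for a sequence $(x_n)$ that is \emph{assumed} to converge to some $x_0\in\varOmega$, and the corollary is asserted to follow from that theorem together with weak sequential completeness; no separate argument is given for why an arbitrary sequence $(x_n)$ with $(h_{x_n})$ weakly convergent must admit such an $x_0$. So the extra hypothesis you feel you need (some form of compactness of $\varOmega$, in addition to the standing continuity of $x\mapsto h_x$) is not supplied by the paper, and your proof sketch is as complete as the paper's own argument on this point.
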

We now turn to the case of homogeneous Young measures associated with Borel functions.
Denote by $\mu$ the Lebesgue measure on $\mathcal B(K)$, by $\mathcal M$ -- the set of all Borel measurable functions from $\varOmega$ to $K$, by $\mathcal M_H\subset \mathcal M$ -- the set of functions such that the Young measures associated with them are homogeneous and $\mu$-continuous,  and define
\begin{itemize}
\item
\[
\mathcal F:=\big\{\nu^h\in\mathcal Y(\varOmega,K):h\in\mathcal M_H\big\};
\]
\item
\[
\mathcal D:=\big\{u\colon K\to\mathbb R: u\;\textnormal{is the density of the measure from}\;\mathcal F\big\}.
\]
\end{itemize}
Recall that for convex subsets of a normed space strong and weak closures coincide, see for example Proposition A.3.19 in \cite{Gasinski}. We thus have the following theorem.
\begin{theorem}\label{weakconvYM}
Assume that the set $K$ is convex. Then the set $\mathcal F$ is closed in the norm topology of 
$\big(\textnormal{rca}(K),\|\cdot\|_{\textnormal{rca}(K)}\big)$ if and only if the set $\mathcal D$ is closed in the norm topology of the space $L^1(K)$.
\end{theorem}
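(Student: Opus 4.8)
The plan is to realise both $\mathcal D$ and $\mathcal F$ as the two sides of a single linear isometry, so that the two closedness statements become literally the same statement transported across an isometric embedding with closed range. First I would introduce the map $\Phi\colon L^1(K)\to\textnormal{rca}(K)$ defined by $(\Phi u)(A):=\int_A u\,d\mu$ for $A\in\mathcal B(K)$. Since $|\Phi u|(A)=\int_A|u|\,d\mu$ for every Borel $A$, one has $\|\Phi u\|_{\textnormal{rca}(K)}=\|u\|_{L^1(K)}$, so $\Phi$ is a linear isometry; its range is precisely the set of $\mu$-continuous elements of $\textnormal{rca}(K)$, and being the isometric image of the complete space $L^1(K)$ it is a closed linear subspace of $\textnormal{rca}(K)$ (one may also see this directly: if $\rho_n\to\rho$ in total variation with each $\rho_n\ll\mu$, then $\mu(A)=0$ gives $|\rho(A)|\le\|\rho-\rho_n\|_{\textnormal{rca}(K)}\to0$, whence $\rho\ll\mu$, and the Radon--Nikodym theorem for the finite measures $\mu,\rho$ furnishes a density). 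By construction every $\nu\in\mathcal F$ is homogeneous and $\mu$-continuous, hence equals $\Phi u$ for a probability density $u$, and the density is unique up to a $\mu$-null set; this is exactly what makes $\Phi$ restrict to a bijection of $\mathcal D$ onto $\mathcal F$. The convexity of $K$ enters, through the Proposition recalled just before the statement, to note that the convex sets $\mathcal D\subset L^1(K)$ and $\mathcal F\subset\textnormal{rca}(K)$ have coinciding norm and weak closures, which is what lets the norm‑closedness equivalence proved here feed the weak‑convergence corollary that follows.

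Granting the correspondence, the two implications are immediate. If $\mathcal D$ is norm closed and $\nu_n\in\mathcal F$ with $\nu_n\to\nu$ in $\textnormal{rca}(K)$, then $\nu$ lies in the closed subspace $\Phi(L^1(K))$, say $\nu=\Phi u$, and $u_n:=\Phi^{-1}\nu_n\to u$ in $L^1(K)$ by the isometry; closedness of $\mathcal D$ gives $u\in\mathcal D$, i.e. $u$ is a density of some member of $\mathcal F$, which must be $\Phi u=\nu$, so $\nu\in\mathcal F$. Conversely, if $\mathcal F$ is norm closed and $u_n\in\mathcal D$ with $u_n\to u$ in $L^1(K)$, then $\Phi u_n\in\mathcal F$ and $\Phi u_n\to\Phi u$ in $\textnormal{rca}(K)$, hence $\Phi u\in\mathcal F$, and its density $u$ belongs to $\mathcal D$. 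If one prefers to phrase the argument through weak convergence, part (b) of Corollary \ref{weakconvmeas} together with the convexity‑based coincidence of closures does the same job.

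The step I expect to require real care — and which the convexity hypothesis is there to make painless — is verifying that $\Phi$ maps $\mathcal D$ \emph{onto} $\mathcal F$ rather than merely into the ambient class of $\mu$-continuous probability measures: one must know that every probability density on $K$ is the density of a homogeneous Young measure $\nu^h$ coming from an actual Borel function $h\colon\varOmega\to K$, and, in each implication, that the limit object produced really belongs to that class and not just to $\Phi(L^1(K))$. Once $\mathcal D$ and $\mathcal F$ are genuinely matched by $\Phi$, what remains is the routine transport of closedness across an isometry onto a closed subspace, the only bookkeeping being the usual identification of elements of $L^1(K)$ modulo $\mu$-null sets, so that ``$u$ is the density of $\nu$'' and ``$\Phi u=\nu$'' are interchangeable.
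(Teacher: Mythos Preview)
Your argument is correct, and it takes a genuinely different route from the paper's. The paper uses the convexity of $K$ to deduce that $\mathcal M_H$, $\mathcal F$ and $\mathcal D$ are convex, hence that norm closedness and weak closedness coincide for them; it then proves the equivalence at the level of \emph{weak} convergence, invoking Corollary~\ref{weakconvmeas}(b), the Vitali--Hahn--Saks and Radon--Nikodym theorems, and an explicit verification that the weak limit is a probability measure, is weak$^\ast$-measurable as a map on $\varOmega$, and is homogeneous. Your isometry argument works directly with norm convergence and never needs convexity for the theorem as stated: once $\Phi$ is an isometry with closed range and $\Phi(\mathcal D)=\mathcal F$, closedness transports for free. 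In particular, your proof shows that the convexity hypothesis is superfluous for Theorem~\ref{weakconvYM} itself; it only becomes relevant, as you note, when one wants to pass to the weak-convergence statement in the subsequent corollary.

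One remark on your final paragraph: the worry you flag there is not a genuine gap. The identity $\Phi(\mathcal D)=\mathcal F$ is definitional, since $\mathcal D$ is \emph{defined} as the set of densities of members of $\mathcal F$; you do not need to know that every probability density arises from some $h\in\mathcal M_H$. Likewise, in the implication ``$\mathcal D$ closed $\Rightarrow$ $\mathcal F$ closed'' you do not need to verify separately that the limit $\nu=\Phi u$ is a homogeneous Young measure: closedness of $\mathcal D$ gives $u\in\mathcal D$, hence $\Phi u\in\Phi(\mathcal D)=\mathcal F$, and membership in $\mathcal F$ already encodes all of that structure. So your second paragraph is already a complete proof, and the caveats of the third can be dropped.
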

\begin{proof}
We use part (b) of the Corollary \ref{weakconvmeas}. The set $K$ is convex so are 
$\mathcal M_H$, $\mathcal F$ and $\mathcal D$. 
Assume that the set $\mathcal F$ is weakly closed and let $u_n$ be a sequence from $\mathcal D$ convergent weakly to $u_0$. Then the sequence $(\nu_n)$, where $\nu_n=\nu^{u_n}=u_nd\mu$, converges weakly to $\nu_0\in\mathcal F$. Assumptions that the density $w$ of $\nu_0$ is not equal $\mu$-almost everywhere to $u_0$ and that 
$u_0\notin\mathcal D$ lead to a contradiction, proving that the set $\mathcal D$ is weakly closed in $L^1(K)$, hence strongly closed.

Conversely, assume that the set $\mathcal D$ is weakly closed and let $(\rho_n)$ be a sequence of the Young measures from the set $\mathcal F$, weakly convergent to the measure $\eta$. Denote their respective densities by $u_n$. Again, from the Vitali-Hahn-Saks theorem and the Radon-Nikodym theorem we infer respectively, that the measure $\eta$ is $\mu$-continuous and has a density $r$. The fact that $r=h$ up to the set of $\mu$-measure $0$ is proved analogously like in the Theorem \ref{conv_nonhom_ym}, while the inequality
\[
\Big\vert1-\int\limits_{K}hd\mu\Big\vert\leq\Big\vert1-\int\limits_{K}u_nd\mu\Big\vert+
\Big\vert\int\limits_{K}u_nd\mu-\int\limits_{K}hd\mu\Big\vert
\]
shows, that $\eta$ is a probability measure on $K$. 

The uniqueness of the weak limit gives the weak$^\ast$-measurability of the mapping
\[
\varOmega\ni x\to\eta\in\textnormal{rca}(K),
\]
proving that $\eta$ is a Young measure. Its homogenity is a consequence of the Proposition \ref{propymdens}.
\end{proof}
Respective weak convergences of Young measures and their densities still hold without convexity assumptions on $K$.
\begin{corollary}\label{concl-weakconvYM}
Let $K\subset\mathbb R^l$ be compact. Let $(\rho_n)$ be a sequence of  $\mu$-continuous homogeneous Young measures with respective densities $u_n$.\\
Then the sequence $(u_n)$ is weakly convergent in $L^1(K)$ to some function $h$ if and \mbox{only if the sequence $(\rho_n)$ is weakly convergent in the Banach space $(\textnormal{rca}(K),\|\cdot\|_{\textnormal{rca}(K)})$} to a homogeneous Young measure $\eta$ with density $h$.
\end{corollary}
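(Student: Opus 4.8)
The plan is to obtain this as a straightforward consequence of Corollary \ref{weakconvmeas}(b) together with the limit-identification argument already carried out in the proofs of Theorem \ref{conv_nonhom_ym} and Theorem \ref{weakconvYM}; the point is that convexity of $K$ entered Theorem \ref{weakconvYM} only in order to pass between the norm and weak closures of the \emph{sets} $\mathcal F$ and $\mathcal D$, and this plays no role when one only needs to track a single convergent sequence and check that its limit happens to be a homogeneous Young measure.

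I would first dispose of the ``if'' direction. Suppose $(\rho_n)$ converges weakly in $\textnormal{rca}(K)$ to a homogeneous Young measure $\eta$ with density $h$. Then $\rho_n(A)\to\eta(A)$ for every $A\in\mathcal B(K)$, so by Corollary \ref{weakconvmeas}(b) the sequence of densities $(u_n)$ converges weakly in $L^1(K)$ to some $h'\in L^1(K)$. Evaluating the limit on an arbitrary Borel set,
\[
\int\limits_A h'\,d\mu=\lim\limits_{n\to\infty}\int\limits_A u_n\,d\mu=\lim\limits_{n\to\infty}\rho_n(A)=\eta(A)=\int\limits_A h\,d\mu,
\]
and uniqueness in the Radon--Nikodym theorem forces $h'=h$ $\mu$-a.e.

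For the ``only if'' direction, assume $u_n\to h$ weakly in $L^1(K)$. By Corollary \ref{weakconvmeas}(b) there is a measure $\eta$ with $\rho_n\to\eta$ weakly in $\textnormal{rca}(K)$, and the task is to show that $\eta$ is a homogeneous Young measure with density $h$. Since each $\rho_n$ is $\mu$-continuous and $\lim_n\rho_n(A)$ exists for every $A$, the Vitali--Hahn--Saks theorem gives $\mu$-continuity of $\eta$ and the Radon--Nikodym theorem supplies a density $r\in L^1(K)$. For fixed $A\in\mathcal B(K)$ the telescoping estimate of Theorem \ref{conv_nonhom_ym},
\[
\Big\vert\int\limits_A r\,d\mu-\int\limits_A h\,d\mu\Big\vert\leq\Big\vert\int\limits_A r\,d\mu-\eta(A)\Big\vert+\big\vert\eta(A)-\rho_n(A)\big\vert+\Big\vert\rho_n(A)-\int\limits_A u_n\,d\mu\Big\vert+\Big\vert\int\limits_A u_n\,d\mu-\int\limits_A h\,d\mu\Big\vert,
\]
has a right-hand side tending to $0$ (the first and third terms vanish identically, the second by weak convergence of $(\rho_n)$, the fourth by weak $L^1$ convergence of $(u_n)$ tested against $\chi_A$), whence $r=h$ $\mu$-a.e. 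The estimate
\[
\Big\vert 1-\int\limits_K h\,d\mu\Big\vert\leq\Big\vert 1-\int\limits_K u_n\,d\mu\Big\vert+\Big\vert\int\limits_K u_n\,d\mu-\int\limits_K h\,d\mu\Big\vert
\]
shows $\int_K h\,d\mu=1$, and $\eta(A)=\int_A h\,d\mu\geq 0$ since $h$, as a weak $L^1$ limit of nonnegative functions, is nonnegative a.e.; hence $\eta$ is a probability measure on $K$. Finally the constant assignment $\varOmega\ni x\mapsto\eta\in\textnormal{rca}(K)$ is trivially weakly$^\ast$-measurable and bounded, so $\eta\in\mathcal Y(\varOmega,K)$, and since its density family $(h)_{x\in\varOmega}$ is a singleton, Proposition \ref{propymdens} identifies $\eta$ as a homogeneous Young measure with density $h$.

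I do not anticipate a genuine obstacle here: the argument is merely a recombination of Corollary \ref{weakconvmeas}(b), the Vitali--Hahn--Saks and Radon--Nikodym theorems, and the telescoping identification used above. The one point to keep in mind is that, unlike in Theorem \ref{weakconvYM}, one is not asserting that the limit lies in a prescribed closed set --- which is exactly where convexity of $K$ was needed --- but only verifying directly that the weak limit is itself a homogeneous Young measure, and for that no convexity is required.
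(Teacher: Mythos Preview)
Your proposal is correct and follows essentially the same approach as the paper: the corollary is stated there without a separate proof, being understood as the sequence-level content of the proof of Theorem \ref{weakconvYM} once the convexity assumption (needed only to identify norm and weak closures of the \emph{sets} $\mathcal F$ and $\mathcal D$) is dropped. You have simply made this explicit, invoking Corollary \ref{weakconvmeas}(b), the Vitali--Hahn--Saks and Radon--Nikodym theorems, and the same telescoping and probability-mass estimates; the added remark on nonnegativity of $h$ is a minor clarification not spelled out in the paper.
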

Consider again examples from pages 116 and 117 in \cite{Roubicek} of the sequences of m-oscillating functions and the Young measures they generate. In \cite{Puchala1} it is shown, that the Young measures associated with each element of the particular sequence form  constant, hence trivially weakly$^\ast$ convergent, sequence. It follows from the fact, that the sequence of the total slopes of these m-oscillating functions is constant. It is a special case of a more general situation described in the next result. It originates from a simple observation in \cite{Puchala4}.
\begin{theorem}
Consider the sequence $(f_n)$ of m-oscillating functions. Assume further that the sequence $(Jt_{f_n})$ of respective total slopes is monotonic. Then the sequence $(\rho_n)$ of Young measures associated with the functions $f_n$ is weakly convergent to the homogeneous Young measure $\rho$. Moreover, the measure $\rho$ has density which is equal to the weak $L^1$ limit of the sequence $(u_n)$ of densities of the Young measures associated with the functions $f_n$.
\end{theorem}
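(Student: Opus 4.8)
The plan is to reduce the statement to the already-proved Corollary \ref{concl-weakconvYM}, by first establishing that the sequence $(u_n)$ of densities converges weakly in $L^1(K)$ and then reading off the limiting Young measure. To begin, recall from Theorem \ref{Prop} and Definition \ref{tslope} that for each $n$ the Young measure $\rho_n$ associated with $f_n$ is homogeneous and $\mu$-continuous, with density $u_n=\tfrac{1}{M}\,Jt_{f_n}$; in particular each $u_n$ is nonnegative on $K$ and, since $\rho_n$ is a probability measure, $\int_K u_n\,d\mu=\rho_n(K)=1$ for every $n$. Monotonicity of the sequence $(Jt_{f_n})$ is precisely pointwise monotonicity of $(u_n)$ on $K$, so we may treat separately the case of a nonincreasing and that of a nondecreasing sequence of densities.

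Next I would pass to the pointwise limit $u_0(y):=\lim_{n\to\infty}u_n(y)$, which exists for every $y\in K$ as an element of $[0,+\infty]$ and defines a Borel measurable function. If $(u_n)$ is nondecreasing, the monotone convergence theorem gives $\int_K u_0\,d\mu=\lim_n\int_K u_n\,d\mu=1$, whence $u_0$ is finite $\mu$-a.e. and $u_0\in L^1(K)$, and moreover $\int_A u_n\,d\mu\to\int_A u_0\,d\mu$ for every $A\in\mathcal B(K)$. If $(u_n)$ is nonincreasing, then $0\le u_n\le u_1\in L^1(K)$, so $u_0\in L^1(K)$ and the dominated convergence theorem again yields $\int_A u_n\,d\mu\to\int_A u_0\,d\mu$ for every $A\in\mathcal B(K)$. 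In both cases, therefore, the limit $\lim_n\int_A u_n\,d\mu$ exists, is finite, and equals $\int_A u_0\,d\mu$ for every Borel subset $A$ of $K$.

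By the theorem of Dieudonn\'e, Theorem \ref{CzajaDiestel}(a) (applied with $X=K$ and $\rho=\mu$), this shows that $(u_n)$ converges weakly in $L^1(K)$ to some limit; testing the weak convergence against indicator functions identifies this limit as $u_0$. Finally, Corollary \ref{concl-weakconvYM} turns the weak $L^1$ convergence $u_n\to u_0$ into weak convergence of $(\rho_n)$ in $\big(\textnormal{rca}(K),\|\cdot\|_{\textnormal{rca}(K)}\big)$ to a homogeneous Young measure $\rho$ whose density is $u_0$. Since $u_0=\lim_n u_n$ is the weak $L^1$ limit of the densities of the $\rho_n$, the pair $(\rho,u_0)$ is exactly what the statement asserts.

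The only real subtlety, and the step I would be most careful with, is the nondecreasing case: a priori nothing prevents the pointwise limit $u_0$ from taking the value $+\infty$ on a set of positive $\mu$-measure, which would destroy the argument. It is precisely the normalization $\int_K u_n\,d\mu=1$ — forced by each $\rho_n$ being a probability measure — that, via monotone convergence, forces $\int_K u_0\,d\mu=1$ and hence $u_0\in L^1(K)$. (Alternatively one could invoke the relative weak compactness of Theorem \ref{oscill-rwc} to extract a weakly convergent subsequence of $(\rho_n)$ and then use monotonicity to promote this to convergence of the whole sequence; but the monotone/dominated convergence route above is shorter and pins down the limit density directly.)
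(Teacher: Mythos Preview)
Your proof is correct and follows essentially the same route as the paper: reduce to Corollary~\ref{concl-weakconvYM} by using monotonicity of the total slopes to show that $\int_A u_n\,d\mu$ converges for every Borel $A\subset K$, then invoke Dieudonn\'e's theorem. The only difference is cosmetic: the paper simply observes that the sequence $n\mapsto\int_A Jt_{f_n}\,d\mu$ is monotone and bounded (by $M$), hence convergent, whereas you go a step further and identify the pointwise limit $u_0$ via the monotone/dominated convergence theorem---which has the small advantage of pinning down the limit density explicitly.
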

\begin{proof}
We can assume that the sequence $(Jt_{f_n})$ is nondecreasing. \mbox{By Theorem \ref{Prop}} the Young measures $\rho_n$ are homogeneous with densities given by the \mbox{equation (\ref{density}).} Then for any $m,\,n\in\mathbb N$, $m\leq n$ and any $A\in\mathcal B(K)$ there holds an inequality
\[
\int\limits_AJt_{f_m}d\mu\leq\int\limits_AJt_{f_n}d\mu,
\]
which, together with Theorem \ref{CzajaDiestel} and Corollary \ref{concl-weakconvYM}, gives the result.
\end{proof}

\end{document}